\def\Q{{\mathbb Q}}
\def\Z{{\mathbb Z}}
\def\C{{\mathbb C}}
\def\F{{\mathbb F}}
\def\G{{\mathbb G}}
\newcommand{\Uu}{{\mathscr U}}
\newcommand{\Bb}{{\mathscr B}}
\newcommand{\Xx}{{\mathscr X}}
\newcommand{\Hom}{{\rm Hom}}
\newcommand{\Sch}{{\rm Sch}}
\newcommand{\Spec}{{\rm Spec}}
\newcommand{\Aut}{{\rm Aut}}
\newcommand{\Sym}{{\rm Sym}}
\newcommand{\BG}{{\Bb G}}
\newcommand{\BT}{{\Bb T}}
\newcommand{\BB}{{\Bb B}}
\newcommand{\BU}{{\Bb U}}
\newcommand{\qa}{{{\overline{\Q}_l}}}
\def\Gal{\mathrm{Gal}}
\def\Aut{\mathrm{Aut}}
\def\Hom{\mathrm{Hom}}
\def\Spec{\mathrm{Spec}}
\def\Frob{\mathrm{Frob}}
\def\tr{\mathrm{tr}}
\def\dim{\mathrm{dim\,}}
\def\rk{\mathrm{rk}}
\def\deg{\mathrm{deg}}
\def\B{{B}}
\newtheorem{global-theorem}{Theorem}
\newtheorem{theorem}{Theorem}[section]
\newtheorem{proposition}[theorem]{Proposition}
\theoremstyle{definition}
\newtheorem{definition}[theorem]{Definition}
\newtheorem{example}[theorem]{Example}
\newtheorem{remark}[theorem]{Remark}
\begin{document}

\title{On the number of rational points of classifying stacks\\ for Chevalley group schemes}

\author[Balchin]{Scott Balchin}
\address[Balchin]{Warwick Mathematics Institute, Zeeman
  Building, Coventry, CV4 7AL, UK}
\author[Neumann]{Frank Neumann}
\address[Neumann]{School of Mathematics and Actuarial Science, Pure Mathematics Group, University of Leicester, University Road, Leicester, LE1 7RH, UK}
 \date{\today}
 
\maketitle

\begin{abstract}
We compute the number of rational points of classifying stacks of Chevalley group schemes using the Lefschetz-Grothendieck trace formula of Behrend for $\ell$-adic cohomology of algebraic stacks. From this we also derive associated zeta functions for these classifying stacks.
\end{abstract}

\section*{Introduction}

Given an algebraic group or group scheme $G$, we can associate to it a classifying stack $\BG$, which is an algebraic stack classifying principal $G$-bundles or $G$-torsors together with their automorphisms. Classifying stacks of algebraic groups play a similar role in algebraic geometry as classifying spaces of topological or Lie groups in algebraic topology. For example, classifying spaces of compact Lie groups and their singular cohomology encode a vast amount of geometry and topology, whereas classifying stacks of algebraic groups and their $\ell$-adic cohomology provide information about the geometry, arithmetic and representation theory of the algebraic group. Many cohomological results for compact Lie groups like Borel's fundamental theorems \cite{Bo} on the singular cohomology of classifying spaces, have direct analogs in terms of $\ell$-adic cohomology for classifying stacks of reductive algebraic groups. 

In this article we will be studying the rational $\ell$-adic cohomology of classifying stacks of a particular class of reductive algebraic groups over the field $\F_q$, namely the Chevalley groups.  It turns out that the numbers of $\F_q$-rational points of their associated classifying stacks are precisely the inverses of the orders of the related finite groups of Lie type, which are obtained as the finite groups of $\F_q$-rational points of the original Chevalley group. The calculation of $\F_q$-rational points employs the trace formula for algebraic stacks due to Behrend \cite{Be1}, \cite{Be2} involving the arithmetic Frobenius morphism acting on the $\ell$-adic cohomology of the associated classifying stack. These numbers of rational points are in fact groupoid cardinalities as they take into account all the aforementioned automorphisms. We calculate these numbers in all cases given by the classification, namely for the untwisted and twisted Chevalley groups. Once we have computed the number of $\mathbb{F}_q$-rational points of 
these classifying stacks we can assemble them into a zeta function for the classifying stack, which as usual encodes a lot of arithmetic information. We will derive these zeta function in all the cases coming out of the classification. It can be expected that the $\ell$-adic cohomology rings and the zeta functions of the classifying stacks will be of interest for the arithmetic and representation theory of Chevalley groups as well as for their associated finite groups of Lie type. Classifying spaces of compact Lie groups also have a rich homotopy theory and similarly algebraic stacks have \'etale homotopy types (see \cite{AM}, \cite{Fr}). The \'etale homotopy types of classifying stacks for reductive algebraic groups should have similar significance as the classical homotopy types for compact Lie groups.  We will address this circle of homotopical questions in a follow-up article to this work.

This article features the following content. In the first section we recall the definition of quotient stacks and classifying stacks. We also collect the required facts about $\ell$-adic cohomology of algebraic stacks. Furthermore, we describe the arithmetic Frobenius morphisms and its action on the cohomology and state Behrend's Lefschetz-Grothendieck trace formula, which counts the number of $\F_q$-rational points of algebraic stacks. Finally, we recall the zeta function for algebraic stacks and illustrate all the concepts for the fundamental example given by the multiplicative group scheme $\G_m$. In the second section we calculate the rational $\ell$-adic cohomology of classifying stacks for connected reductive algebraic groups over $\Spec(\F_q)$ in analogy with the classical results of Borel on the singular cohomology of classifying spaces for connected compact Lie groups \cite{Bo}. We state a general expression for the number of $\F_q$-rational points for these classifying stacks and derive as a corollary Steinberg's formula for the order of the finite group of $\F_q$-rational points of a given connective reductive algebraic group over $\F_q$ (see \cite{St}, \cite{St3}). In the final section, we apply this machinery to obtain explicit formulas for the number of $\F_q$-rational points and related zeta functions for the classifying stacks of Chevalley and Steinberg groups.

\section{Classifying stacks, $\ell$-adic cohomology, Frobenius morphisms and zeta functions}\label{Sec1}

In this section we will recall briefly the notions of classifying stacks of algebraic groups, their $\ell$-adic cohomology, and how to count $\F_q$-rational points using Behrend's Grothendieck-Lefschetz trace formula for algebraic stacks. For the general theory of algebraic stacks we will refer to the book by Laumon and Moret-Bailly \cite{LMB}. Other resources are \cite{So}, \cite{N}, \cite{Ol} or the encyclopedic stacks project \cite{Stacks}.

\begin{definition} Let $Z$ be a smooth scheme over a noetherian scheme $S$ and $G/S$ be a reductive group scheme of finite rank over $S$ together with a (right) $G$-action $\mu: Z\times_S G\rightarrow Z$. The {\it quotient stack} $[Z/G]$ is defined via its groupoid of sections as follows: For a given scheme $U/S$ over $S$, the groupoid $[Z/G](U)$ of $U$-valued points of $[Z/G]$ is the groupoid of principal $G$-bundles $P$ over $U$ together with a $G$-equivariant morphism $\alpha: P\rightarrow Z$ and isomorphisms of this data. In the special case that $Z=S$ being equipped with the trivial $G$-action, the resulting quotient stack $\BG:=[S/G]$  is called the {\it classifying stack} of $G$. 
\end{definition}

The quotient stack $[Z/G]$, and in particular the classifying stack $\BG$, under the above general conditions are smooth algebraic stacks which are locally of finite type (see \cite{LMB}, \cite{Be2} or \cite{N}).

The classifying stack $\mathscr{B}G$ should be viewed as an algebro-geometric analogue of the classifying space $BG$ in algebraic topology.  In the topological setting, principal $G$-bundles over a topological space $X$ are classified by homotopy classes of maps into the classifying space $BG$, whereas in algebraic geometry, principal $G$-bundles--or $G$-torsors--over a scheme $X$ can be classified by morphisms of stacks from $X$ into the classifying stack $\mathscr{B}G$.

We will now recall briefly the machinery of $\ell$-adic cohomology of algebraic stacks. As a general reference for the cohomology of algebraic stacks and its main properties we refer to the book of Laumon and Moret-Bailly \cite{LMB}. Especially for the definition of $\ell$-adic cohomology, its main properties and the general formalism of cohomology functors we refer to the work of Behrend \cite{Be1,Be2} and the subsequent articles by Laszlo and Olsson \cite{LO1,LO2}. An alternative systematic approach towards $\ell$-adic cohomology for algebraic stacks was given recently as well by Gaitsgory and Lurie \cite{GL}. Here we will follow also closely the exposition and notations of \cite{HeSch}.

Let $\Xx$ be an algebraic stack, which is smooth and locally of finite type over the base scheme $S=\Spec(\F_q)$, where $\F_q$ the field with $q=p^s$ elements. Let $\ell$ be a prime different from $p$. The rational $\ell$-adic cohomology $H^*( \Xx_{\overline{\F}_q}, \overline{\Q}_\ell)$ over the lisse-\'etale site $\Xx_{\text{lis-\'et}}$ of $\Xx$ is given as the limit of the $\ell$-adic cohomology groups over all open substacks $\Uu$ of finite type of the algebraic stack  $\Xx_{\overline{\F}_q}=\Xx\times_{\Spec(\F_q)}\Spec({\overline{\F}_q})$ over the algebraic closure $\overline{\F}_q$, i.e., we have
$$H^*( \Xx_{\overline{\F}_q}, \overline{\Q}_\ell)=\lim_{\substack{{\Uu\subset\Xx_{\overline{\F}_q}},\\ 
\text{open, finite type}}}\hspace*{-0.5cm}H^*(\Uu, \overline{\Q}_\ell).$$
In the following, we will normally write $H^*( \Xx, \overline{\Q}_\ell)$ instead of $H^*( \Xx_{\overline{\F}_q}, \overline{\Q}_\ell)$ to simplify the notations. 

The $\ell$-adic cohomology has a K\"{u}nneth decomposition~\cite{Be1,Be2,LO2}. That is, for $\mathscr{X}$, $\mathscr{Y}$ algebraic stacks as above, there is a natural isomorphism
 $$
 H^\ast(\mathscr{X} \times \mathscr{Y},\overline{\Q}_\ell) \simeq H^\ast(\mathscr{X}, \overline{\Q}_\ell) \otimes H^\ast(\mathscr{Y}, \overline{\Q}_\ell).
 $$

We also have an arithmetic Frobenius morphism, which acts on an algebraic stack $\Xx$ over $\Spec(\F_q)$ and its $\ell$-adic cohomology. For this, let
$$\Frob: \overline{\F}_q \rightarrow \overline{\F}_q, \,\, a\mapsto a^q$$
be the classical Frobenius morphism given by a generator of the Galois group $\Gal(\overline{\F}_q/\F_q)$ of the field extension $\overline{\F}_q/\F_q$. 
It induces an endomorphism of schemes
$$\Frob_{\Spec(\overline{\F}_q)}: \Spec(\overline{\F}_q) \rightarrow \Spec(\overline{\F}_q)$$
which naturally extends to an endomorphism of algebraic stacks defined as
$$\Frob_{\Xx}:=id_{\Xx} \times \Frob_{\Spec(\overline{\F}_q)}: \Xx_{\overline{\F}_q} \rightarrow \Xx_{\overline{\F}_q}.$$
This endomorphism $\Frob_{\Xx}$ of algebraic stacks is called the {\em arithmetic Frobenius morphism}.
By naturality, it induces an endomorphism $\Psi_q:=\Frob_{\Xx}^*$ on the $\ell$-adic cohomology of the algebraic stack $\Xx$:
$$\Psi_q=\Frob_{\Xx}^*: H^*(\Xx, \qa)\rightarrow H^*(\Xx, \qa).$$


Analogous to case of schemes $X$ over a base $S=\Spec(\F_q)$ we can calculate the number $\mathscr{X}(\mathbb{F}_q) = \# \mathscr{X}(\text{Spec}(\mathbb{F}_q)$ of $\F_q$-rational points for an algebraic stack $\Xx$ locally of finite type using a Lefschetz type trace formula. In the case of algebraic stacks though we need to modify this count to a 'stacky' count as we have to consider instead the groupoid cardinality of the groupoid $\Xx(\Spec(\F_q))$ of $\F_q$-rational points. 
The analogue of the Grothendieck-Lefschetz trace formula for algebraic stacks was conjectured and proved by Behrend, for particular cases in \cite{Be1}, and in full generality in \cite{Be2}.

\begin{theorem}[Behrend-Grothendieck-Lefschetz trace formula]\label{Trace}
Let $\Xx$ be a smooth algebraic stack of finite type over $\Spec(\F_q)$ and $\Psi$ be the arithmetic
Frobenius morphism. Then we have \vspace{-0.1cm}
$$q^{dim (\Xx)}\sum_{s\geq 0}(-1)^s \tr
(\Psi_q |H^s(\Xx, \overline{\Q}_\ell))= \#\Xx(\F_q).$$
Here $\#\Xx(\F_q)=\#\Xx(\Spec(\F_q))$ is the groupoid cardinality 
$$\#\Xx(\Spec(\F_q))=\sum_{x\in [\Xx(\Spec(\F_q))]}\frac{1}{\#Aut_{\Xx(\Spec(\F_q))}(x)}$$
of the groupoid $\Xx(\Spec(\F_q))$ of $\F_q$-rational points of the algebraic stack $\Xx$, where $\#Aut_{\Xx(\Spec(\F_q))}(x)$ is the order of the group of automorphisms of the isomorphism class $x$ in the groupoid $\Xx(\Spec(\F_q))$ and $[\Xx(\Spec(\F_q))]$ denotes the set of isomorphism classes of objects in the groupoid $\Xx(\Spec(\F_q))$.
\end{theorem}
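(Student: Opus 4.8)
The plan is to follow Behrend's d\'evissage: deduce the formula for schemes from the classical Grothendieck--Lefschetz trace formula, verify both sides are additive under stratification, reduce a general $\Xx$ to quotient stacks $[Y/\GL_n]$, and compute those by fibring over $\mathscr{B}\GL_n$. First I would dispose of the base case. For a smooth $\F_q$-scheme $X$ of pure dimension $d$, the classical formula gives $\#X(\F_q)=\sum_s(-1)^s\tr\!\big(F\mid H^s_c(X_{\fq},\overline{\Q}_\ell)\big)$ with $F$ the geometric Frobenius; Poincar\'e duality $H^s_c(X)\times H^{2d-s}(X)\to\overline{\Q}_\ell(-d)$ identifies the eigenvalues of $F$ on $H^s_c$ with $q^d$ times those of $\Psi_q=F^{-1}$ on $H^{2d-s}$, and reindexing converts this into exactly the asserted identity $\#X(\F_q)=q^{\dim X}\sum_s(-1)^s\tr(\Psi_q\mid H^s(X))$. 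The new phenomenon for stacks is that $H^s(\Xx,\overline{\Q}_\ell)$ need not vanish for $s\gg 0$ --- already $H^*(\BGm)=\overline{\Q}_\ell[c]$ with $c$ in degree $2$ --- so the alternating sum is an infinite series whose convergence must be secured first. This rests on a purity estimate: on a smooth stack the eigenvalues of $\Psi_q$ on $H^s$ are algebraic numbers of archimedean absolute value at most $q^{-s/2}$, while $\dim H^s$ grows only polynomially in $s$, so $\sum_s(-1)^s\tr(\Psi_q\mid H^s)$ converges absolutely.

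Next I would establish additivity. Given an open substack $\Uu\subset\Xx$ with closed complement $\Zz$, the groupoid $\Xx(\Spec\F_q)$ is the disjoint union of $\Uu(\Spec\F_q)$ and $\Zz(\Spec\F_q)$, so $\#\Xx(\F_q)=\#\Uu(\F_q)+\#\Zz(\F_q)$ is immediate. On the cohomological side one works throughout with compactly supported cohomology, where the excision long exact sequence $\cdots\to H^s_c(\Uu)\to H^s_c(\Xx)\to H^s_c(\Zz)\to H^{s+1}_c(\Uu)\to\cdots$, together with additivity of alternating Frobenius traces over (absolutely convergent) long exact sequences, yields $\sum_s(-1)^s\tr(F\mid H^s_c(\Xx))=\sum_s(-1)^s\tr(F\mid H^s_c(\Uu))+\sum_s(-1)^s\tr(F\mid H^s_c(\Zz))$; one returns to the stated form via Poincar\'e duality only at the very end, on the smooth $\Xx$. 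By the structure theory of algebraic stacks of finite type with linear stabilizers, such an $\Xx$ admits a finite stratification into locally closed substacks each isomorphic to $[Y/\GL_n]$ for a quasi-projective $\F_q$-scheme $Y$, which after refinement may be taken smooth; it therefore suffices to treat $\Xx=[Y/\GL_n]$.

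For the quotient stack, Lang's theorem gives $H^1(\F_q,\GL_n)=0$ (as $\GL_n$ is connected), so every $\GL_n$-torsor over $\F_q$ is trivial and orbit--stabilizer yields $\#[Y/\GL_n](\F_q)=\#Y(\F_q)/\#\GL_n(\F_q)$. On cohomology, $[Y/\GL_n]\to\mathscr{B}\GL_n$ is a fibration with fibre $Y$ and trivial monodromy (again $\GL_n$ connected), so its Leray spectral sequence has $E_2$-page $H^*(\mathscr{B}\GL_n,\overline{\Q}_\ell)\otimes H^*(Y,\overline{\Q}_\ell)$ with $\Psi_q$ acting diagonally; additivity of alternating Frobenius traces over the spectral sequence then gives $\sum_s(-1)^s\tr(\Psi_q\mid H^s([Y/\GL_n]))=\big(\sum_s(-1)^s\tr(\Psi_q\mid H^s(\mathscr{B}\GL_n))\big)\big(\sum_s(-1)^s\tr(\Psi_q\mid H^s(Y))\big)$. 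Since $H^*(\mathscr{B}\GL_n,\overline{\Q}_\ell)=\overline{\Q}_\ell[c_1,\dots,c_n]$ with $c_i$ the universal Chern classes, of degree $2i$, on which $\Psi_q$ acts by $q^{-i}$, the first factor is $\prod_{i=1}^n(1-q^{-i})^{-1}$ (a geometric series, convergent because $q>1$), while by the scheme case the second factor is $q^{-\dim Y}\#Y(\F_q)$. Using $\dim[Y/\GL_n]=\dim Y-n^2$, the claimed identity for $[Y/\GL_n]$ reduces to the elementary check $q^{-n^2}\prod_{i=1}^n(1-q^{-i})^{-1}=1/\#\GL_n(\F_q)$, which holds since $\#\GL_n(\F_q)=q^{n(n-1)/2}\prod_{i=1}^n(q^i-1)$.

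The main obstacle is the analytic and arithmetic bookkeeping underlying the first two paragraphs: making the infinite alternating trace unconditionally meaningful and stable under d\'evissage is exactly what forces one (following Behrend) to single out a workable class of (``stratifiable'') stacks and to prove the attendant purity and finiteness statements for their $\ell$-adic cohomology. The reduction to $[Y/\GL_n]$'s likewise rests on nontrivial structure theory for algebraic stacks. Once those ingredients are in place, the surviving steps --- d\'evissage, the classical trace formula for schemes, and the description of $H^*(\mathscr{B}\GL_n)$ as a Frobenius module --- are essentially formal.
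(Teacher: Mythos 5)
The paper does not actually prove this theorem: it is Theorem~\ref{Trace}, stated with a citation to Behrend's work (\cite{Be1}, \cite{Be2}) and used as a black box thereafter. So there is no internal proof to compare against. What you have written is a reasonable, structurally correct sketch of the d\'evissage that underlies Behrend's own argument: reduce by stratification to quotient stacks $[Y/\GL_n]$, feed in the classical trace formula for schemes via Poincar\'e duality, and treat $[Y/\GL_n]$ through Lang's theorem on the point-counting side and the Leray spectral sequence for $[Y/\GL_n]\to\Bb\GL_n$ on the cohomological side. The numerics at the end (the identity $q^{-n^2}\prod_{i=1}^n(1-q^{-i})^{-1}=1/\#\GL_n(\F_q)$, using $\#\GL_n(\F_q)=q^{n(n-1)/2}\prod_{i=1}^n(q^i-1)$) check out, and the Poincar\'e duality reindexing in your base case is stated correctly.

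The substantive gaps are the ones you flag yourself, and it is worth insisting that they carry most of the weight of Behrend's papers. First, the absolute-convergence claim: the bound $|\text{eigenvalue of }\Psi_q$ on $H^s|\le q^{-s/2}$ together with polynomial growth of $\dim H^s$ is not automatic for an arbitrary smooth algebraic stack; it has to be established for the class of stacks under consideration, and without it, not a single manipulation you perform with the infinite alternating sum is justified. Second, the assertion that alternating traces pass through long exact sequences and spectral sequences that have infinitely many nonzero entries is not formal even granting convergence; one must control the series uniformly across the d\'evissage, and this is precisely why Behrend restricts attention to a carefully delimited class of stacks and proves finiteness and mixedness statements first. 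Third, the stratification of a finite-type algebraic stack with affine stabilizers into locally closed pieces of the form $[Y/\GL_n]$, which you invoke in one line, is itself a nontrivial structure theorem (Kresch), and the further refinement making all strata smooth needs generic smoothness arguments applied stratum by stratum. None of these points invalidates your outline, but they are where a complete proof would have to do its work, and the present paper sidesteps all of them by citing Behrend.
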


We are mostly interested in algebraic groups over the field $\F_q$ and their classifying stacks. By an algebraic group we understand here a smooth group scheme $G$ of finite type over $\Spec(\F_q)$ (see \cite{M2}). The algebraic group $G$ is called connected if it is geometrically connected in the sense of  \cite[Exp. VI.A, 2.1.1]{DmGr}. In the case of quotient stacks of group actions of algebraic groups on schemes or algebraic spaces over $\F_q$, the groupoid cardinality of the groupoid of of $\F_q$-rational points of the quotient stack can be identified as follows  (see \cite[Prop. 2.2.3]{Be1})

\begin{proposition}\label{BGvsG}
Let $X$ be a smooth algebraic space of finite type over $\Spec(\F_q)$ and $G$ be a connected algebraic group over $\Spec(\F_q)$
acting on $X$. If $\Xx$ is the quotient stack $\Xx=[X/G]$, then we have
$$\#\Xx(\F_q)=\frac{\#X(\F_q)}{\#G(\F_q)}.$$
In particular, for the classifying stack $\BG$ of the algebraic group $G$ we have
$$\#\BG(\F_q)=\frac{1}{\#G(\F_q)}.$$
\end{proposition}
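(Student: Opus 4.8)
The plan is to make the groupoid $\Xx(\Spec(\F_q))=[X/G](\Spec(\F_q))$ completely explicit and then reduce its groupoid cardinality to an ordinary orbit--stabilizer count for the finite group $G(\F_q)$ acting on the finite set $X(\F_q)$. Recall from the definition that an object of $[X/G](\Spec(\F_q))$ is a pair $(P,\alpha)$ where $P$ is a principal $G$-bundle over $\Spec(\F_q)$ and $\alpha\colon P\to X$ is a $G$-equivariant morphism, with morphisms the bundle isomorphisms compatible with the maps to $X$. The crucial observation is that, because $G$ is a \emph{connected} algebraic group over the finite field $\F_q$, Lang's theorem gives $H^1(\Spec(\F_q),G)=H^1(\Gal(\fq/\F_q),G(\fq))=\ast$, so every principal $G$-bundle over $\Spec(\F_q)$ is trivial. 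Hence each object of $[X/G](\Spec(\F_q))$ is isomorphic to one of the form $(G,\alpha)$ with $\alpha$ a $G$-equivariant map from the trivial torsor $G$ to $X$; such an $\alpha$ is the same datum as the point $x:=\alpha(e)\in X(\F_q)$ obtained by restricting along the identity section, and conversely every $x\in X(\F_q)$ arises this way.

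Next I would read off the groupoid structure. The automorphisms of the trivial right $G$-torsor $G$ over $\Spec(\F_q)$ are exactly the left translations by elements of the finite group $G(\F_q)$, and such a translation is compatible with $\alpha_x$ precisely when the corresponding element of $G(\F_q)$ fixes $x$. Thus two objects $(G,\alpha_x)$ and $(G,\alpha_y)$ are isomorphic if and only if $x$ and $y$ lie in the same $G(\F_q)$-orbit on $X(\F_q)$, and the automorphism group of the object labelled by $x$ is the point-stabilizer of $x$ in $G(\F_q)$, which coincides with $\Stab_G(x)(\F_q)$, the $\F_q$-points of the scheme-theoretic stabilizer. In other words $[X/G](\Spec(\F_q))$ is equivalent to the action groupoid of $G(\F_q)$ on $X(\F_q)$; in particular it is essentially finite, so its cardinality is defined, and
\[
\#\Xx(\F_q)\;=\;\sum_{[x]\in X(\F_q)/G(\F_q)}\frac{1}{\#\Stab_G(x)(\F_q)}.
\]

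Finally, applying the orbit--stabilizer theorem inside the finite group $G(\F_q)$, the orbit of $x$ has $\#G(\F_q)/\#\Stab_G(x)(\F_q)$ elements, so summing over orbits gives $\#X(\F_q)=\#G(\F_q)\cdot\sum_{[x]}1/\#\Stab_G(x)(\F_q)$, and dividing yields $\#\Xx(\F_q)=\#X(\F_q)/\#G(\F_q)$; taking $X=\Spec(\F_q)$ with the trivial action specializes this to $\#\BG(\F_q)=1/\#G(\F_q)$. The one step that really does the work is the vanishing of $H^1(\Spec(\F_q),G)$ via Lang's theorem: this is exactly where connectedness of $G$ enters, and without it nontrivial $G$-torsors would contribute extra isomorphism classes to the groupoid (for instance $\BG$ with $G=\Z/2\Z$ already violates the formula), so this is the essential point rather than a routine one. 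Everything afterwards is bookkeeping with finite groupoid cardinalities; the only real technical care needed is in setting up the equivalence between $[X/G](\Spec(\F_q))$ and the action groupoid of $G(\F_q)$ on $X(\F_q)$ correctly, i.e.\ the descent/Galois-cohomology identification.
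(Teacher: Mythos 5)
Your proof is correct, and it is actually more complete than the paper's: the paper proves the general quotient-stack formula $\#[X/G](\F_q)=\#X(\F_q)/\#G(\F_q)$ by citing Behrend's Lemma~2.5.1 and only gives a direct argument for the special case $\Xx=\BG$, where the action groupoid degenerates to a one-object groupoid. You instead prove the general case from scratch by the same core mechanism --- Lang's theorem kills $H^1(\F_q,G)$, so every $G$-torsor over $\Spec(\F_q)$ is trivial --- and then identify $[X/G](\Spec(\F_q))$ with the finite action groupoid $X(\F_q)/\!\!/G(\F_q)$, after which orbit--stabilizer delivers the mass formula. The identification is carried out correctly: a $G$-equivariant $\alpha\colon G\to X$ is determined by $x=\alpha(e)\in X(\F_q)$; automorphisms of the trivial torsor are left translations by $G(\F_q)$; the condition for $L_h$ to intertwine $\alpha_x$ and $\alpha_y$ is exactly $y\cdot h=x$, so isomorphism classes are $G(\F_q)$-orbits and the automorphism group at $x$ is the stabilizer $\Stab_{G(\F_q)}(x)$. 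Your observation that connectedness is the decisive hypothesis (with $\Z/2\Z$ as a counterexample otherwise) is exactly right and is also the point the paper emphasizes. The only thing worth noting is that the paper takes the general statement as essentially a black box from Behrend, so what you have written is closer to an actual proof of Behrend's lemma in this setting than to the paper's text, but the two arguments agree on the special case and on the key input.
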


\begin{proof}This is essentially \cite[Lemma 2.5.1]{Be1} and the particular case follows by letting $X=\Spec(\F_q)$ be a point with trivial action of $G$. For the particular case, which is most relevant here, we can also argue directly in the following way. As $G$ is connected, Lang's theorem \cite{La} implies that each principal $G$-bundle on $\Spec(\F_q)$ is isomorphic to the trivial bundle and therefore the groupoid $\BG(\F_q)$ contains only a single object up to isomorphism, whose automorphism group is the finite group $G(\F_q)$. This gives the desired groupoid cardinality $\#\BG(\F_q)$.
\end{proof}

\begin{remark}
We could also employ a Lefschetz type trace formula for $\ell$-adic cohomology with compact support using the action of the geometric Frobenius and Poincar\'e duality instead (see \cite[Theorem 1.1]{Su}).  This approach has certain advantages due to the existence of a full theory of six-operations for algebraic stacks \cite{LO1, LO2} and removes any smoothness condition. We will however follow here the original approach of Behrend using the arithmetic Frobenius morphism \cite{Be1}, \cite{Be2}.
 \end{remark}
 
Finally, for an algebraic stack $\Xx$ of finite type over $\F_q$ we can also define a zeta function incorporating all the counts of $\F_{q^i}$-rational points formally defined as (see \cite{Be1})

\begin{definition}
Let $\mathscr{X}$ be a smooth algebraic stack of finite type over $\Spec(\F_q)$.  The \emph{zeta function} of $\mathscr{X}$ is the formal power series $\zeta_{\mathscr{X}}(t)\in \Q[[t]]$ given by
$$\zeta_{\mathscr{X}}(t) = \exp \left(  \sum^\infty_{i = 1}  \# \mathscr{X}(\mathbb{F}_{q^i}) \frac{t^i}{i} \right).$$
 \end{definition}
 
If $\mathscr{X}$ is a Deligne-Mumford stack, for example the moduli stack of elliptic curves $Ell$ or more generally the moduli stack $\mathscr{M}_{g, n}$ of algebraic curves of genus $g$ with $n$ marked points, it can be shown that $\zeta_{\mathscr{X}}$ is a rational function. But in general, this zeta function for algebraic stacks might not be a rational function. Nevertheless, Behrend \cite{Be1} showed that if $\mathscr{X}$ is given as a quotient stack of an algebraic space by a linear algebraic group, then the zeta function $\zeta_{\mathscr{X}}$ is a meromorphic function in the complex $t$-plane and can be calculated in many interesting cases
(compare also \cite{Su}).

Let us finish this section with an instructive guiding example, for which we can study all the ingredients introduced above and their interplay. This can be viewed as a prelude for the general case and our explicit calculations in the following sections (compare also \cite{Be2}, \cite{Su}).

\begin{example}[Multiplicative group scheme]
Let $\G_m$ be the multiplicative group scheme over  $\Spec(\F_q)$ i.e., given as
$$\G_m=\Spec(\F_q[x, x^{-1}]).$$

Let $\Bb \G_m$ be the associated classifying stack which classifies $\G_m$-torsors i.e., the classifying stack of line bundles. For the dimension of this algebraic stack we have
 $$\dim (\Bb \G_m) =- \dim(\G_m) =-1$$
The number $\#\Bb \G_m (\Spec(\F_q))$ of $\F_q$-rational points of  $\Bb \G_m$ is given as the number  of line bundles (up to isomorphism) over the ``point" $\Spec(\F_q)$. As all line bundles over the ``point" $\Spec(\F_q)$ are
trivial, there is therefore just one isomorphism class $x$ in $\Bb \G_m(\Spec(\F_q))$. 

Furthermore we have
$$\#\Aut_{\Bb \G_m(\Spec(\F_q))}(x)=\#\G_m(\F_q)=\#\F_q^*=q-1$$ 
so in other words we have that
$$\#\Bb \G_m(\F_q)=\sum_{x\in
[\Bb \G_m(\Spec(\F_q))]}\frac{1}{\#\Aut_{\Bb \G_m(\Spec(\F_q))}(x)}=\frac{1}{q-1}.$$

The $\ell$-adic  cohomology of $\Bb \G_m$ is basically the cohomology of an infinite
projective space, namely we have
$$H^*(\Bb \G_m, \overline{\Q}_\ell)=\overline{\Q}_\ell[c],$$ 
with $c$ a generator in degree 2 and so from the Behrend-Grothendieck-Lefschetz trace formula we therefore get
$$q^{\dim (\Bb \G_m)}\sum_{i\geq 0}\tr (\Psi_q |H^{2i}(\Bb \G_m,
\Q_\ell))=q^{-1}(1+q^{-1}+q^{-2}+\cdots)=\frac{1}{q}\sum_{i=0}^{\infty}\frac{1}{q^i}.$$
 
This formal calculation therefore gives a ``stacky" proof for the well-known formula
$$\sum_{i=0}^{\infty}\frac{1}{q^{i+1}}=\frac{1}{q-1}.$$
We can then compute the zeta function as follows
$$ 
\begin{aligned}
\zeta_{\Bb \G_m}(t)& =\exp(\sum_{i=1}^{\infty}\#\Bb \G_m (\F_{q^{i}})\frac{t^i}{i})=\exp(\sum_{i=1}^{\infty}\frac{1}{q^{i}-1}\frac{t^i}{i})\\
                        &=\exp(\sum_{i=1}^{\infty}\frac{t^i}{i}\sum_{k=1}^{\infty}\frac{1}{q^{ki}})=\prod_{k=1}^{\infty}\exp(\sum_{i=1}^{\infty}\frac{(t/q^{k})^i}{i})\\
                  &=\prod_{k=1}^{\infty}(1-q^{-k}t)^{-1}.
\end{aligned}
 $$
From this one can also derive a functional equation for the zeta function, namely we have
$$\zeta_{\Bb \G_m}(qt)=\frac{1}{1-t}\zeta_{\Bb \G_m}(t)$$
and so the zeta function for $\Bb \G_m$ has a meromorphic continuation to the complex $t$-plane having simple poles at $t=q^n$ for
all $n\geq 1$. 
\end{example}

\section{Cohomology of classifying stacks and the theorems of Borel and Steinberg}\label{sec:coh}

Let us collect some facts about the $\ell$-adic cohomology of classifying stacks, which are basically algebro-geometric analogues of classical theorems of Borel for classifying spaces of compact Lie groups \cite{Bo} (compare also \cite{Be1}, \cite{Be2}, \cite{Su} and \cite{IlZh}).

Let $G$ be a connected algebraic group over the field $\F_q$. We will employ the Leray spectral sequence of the universal morphism of algebraic stacks $f: \Spec(\F_q)\rightarrow \BG$ with fibre $G$, which is the algebro-geometric analogue of the universal topological fibration over the classifying space of a compact Lie group. The spectral sequence is given as follows:
$$E^{s, t}_2\cong H^s(\BG, R^tf_* \overline{\Q}_\ell)\Rightarrow H^{s+t}(\Spec(\F_q), \overline{\Q}_\ell).$$
Here $R^tf_* \overline{\Q}_\ell$ is a constructible sheaf on $\BG$ and we have $$R^tf_* \overline{\Q}_\ell=a^*f^*R^tf_*\overline{\Q}_\ell=a^*H^t(G, \overline{\Q}_\ell),$$ where $a: \BG\rightarrow \Spec(\F_q)$ is the structure morphism of the classifying stack $\BG$ and $H^t(G, \overline{\Q}_\ell)$ the $\Gal(\overline{\F}_q/\F_q)$-module interpreted as a sheaf on $\Spec(\F_q)$ (see \cite[Lemma 5.4]{Be2}).
The projection formula for the morphism $f$ of algebraic stacks then implies for the $E_2$-term of the spectral sequence that (see \cite[Cor. 5.3, Prop. 5.5]{Be2})
$$H^s(\BG, R^tf_* \overline{\Q}_\ell)\cong H^s(\BG, \overline{\Q}_\ell)\otimes_{\overline{\Q}_\ell}H^t(G, \overline{\Q}_\ell).$$
Furthermore, the spectral sequence converges to $H^*(\Spec(\F_q), \overline{\Q}_\ell)$, which means that we have $E^{s, t}_{\infty}=0$ if $(s, t)\neq (0, 0)$ and $H^0(\Spec(\F_q), \overline{\Q}_\ell)\cong \overline{\Q}_\ell$. Therefore the spectral sequence simplifies as follows
$$E^{s, t}_2\cong H^s(\BG, \overline{\Q}_\ell)\otimes_{\overline{\Q}_\ell}H^t(G, \overline{\Q}_\ell) \Rightarrow \overline{\Q}_\ell.$$

For each $t\geq 1$, the differential $d^{0, t}_{t+1}$ is an isomorphism
$$d^{0, t}_{t+1}: E^{0, t}_{t+1} \stackrel{\cong}\longrightarrow E^{t+1, 0}_{t+1}.$$
on the $(t+1)$-page. It is a monomorphism resp., epimorphism because it is the last possible non-zero differential {\it from} $E^{0, t}_*$
resp., {\it into} $E^{t+1, 0}_*$.
So for each $t\geq 1$ we get an isomorphism from the transgressive subspace $N^t=E^{0, t}_{t+1}$ of $H^t(G, \overline{\Q}_\ell) \cong E^{0, t}_2$ to the quotient $E^{t+1, 0}_{t+1}$ of $H^{t+1}(\BG, \overline{\Q}_\ell)\cong E^{t+1, 0}_2$. In particular, the epimorphism $H^{t+1}(\BG, \overline{\Q}_\ell)\twoheadrightarrow N^t$ is induced by the differential $d^{0, t}_{t+1}$.

Let now $N=\bigoplus_{t\geq 1}N^t$ be the graded transgressive $\overline{\Q}_\ell$-vector space. Then we have the following from Borel's transgression theorem \cite[Th\'eor\`eme 13.1]{Bo} in its algebraic form (compare \cite[Theorem 5.6]{Be2}, \cite[Theorem 4.8]{Su}):

\begin{itemize}
\item[(i)] If $t$ is even, then $N^t=0$.
\item[(ii)] The canonical map $\Lambda^* N \stackrel{\cong}\longrightarrow H^*(G, \overline{\Q}_\ell)$ is an isomorphism of graded $\overline{\Q}_\ell$-algebras.
\item[(iii)] The spectral sequence induces an epimorphism of graded $\overline{\Q}_\ell$-vector spaces
$$H^*(\BG, \overline{\Q}_\ell)\twoheadrightarrow N[-1]$$
and every section gives an isomorphism
$$\Sym^*(N[-1]) \stackrel{\cong}\longrightarrow H^*(\BG, \overline{\Q}_\ell).$$
\end{itemize}

Let us now assume that $G$ is a connected reductive algebraic group over the field $\F_q$ of rank $\rk(G)=n$. Let $B=T\cdot U$ be a Borel subgroup of $G$ with maximal torus $T$ and unipotent radical $U$. Let $N_G(T)$ be the normaliser of $T$ in $G$ and $W=N_G(T)/T$ the Weyl group. Furthermore, let $X_*(T)$ be the lattice of cocharacters $\lambda: \G_m\rightarrow T$ of $T$ and $\varepsilon'_1, \ldots \varepsilon'_n$ roots of unity given as the eigenvalues of the arithmetic Frobenius on the lattice $X_*(T)$ and therefore also on the character group  $X^*(T)$. From a classical theorem of Chevalley \cite{Che}, it follows that the subalgebra of $W$-invariant elements $\Sym^* (X_*(T)\otimes \C)^W$of the symmetric algebra $\Sym^* (X_*(T)\otimes \C)$ is generated by homogeneous polynomials $I_1, \ldots, I_n$ with uniquely determined degrees $d_1, \ldots, d_n$ i.e. $\deg(I_j)=d_j$ for $j=1, \ldots, n$. For example, for an $n$-dimensional torus $\G^n_m$ these degrees are simply $d_j=1$ for all $j=1, \ldots, n$. If $G$ is semi-simple, then we have $d_j>1$ for all $j=1, \ldots, n$. The homogeneous generators $I_j$ can be chosen as eigenvectors of the induced Frobenius action with their eigenvalues being roots of unity denoted by $\varepsilon_1, \ldots, \varepsilon_n$ (compare \cite[2.1]{St}, \cite[3.1]{KaRi}, \cite{Dm} and \cite[Sommes trig. 8.2]{SGA4.5}).

We have the projection morphism $p:\BT\rightarrow \BG$ with fibre the flag variety $G/T$ fitting into a $2$-cartesian diagram of algebraic stacks
\[
\xy \xymatrix{ G/T\ar[r]\ar[d]&
\BT\ar[d]^p\\
\Spec(\F_q)\ar[r]& \BG}
\endxy
\]

The above diagram plays a similar role as the analogous topological fibration for classifying spaces of compact Lie groups.
The following theorem is an algebro-geometric analogue of another classical theorem of Borel for connected compact Lie groups (compare \cite{Bo} and \cite{IlZh}).

\begin{theorem}\label{CohBG2}
Let $G$ be a connected reductive algebraic group over the field $\F_q$. Then we have:
\begin{itemize}
\item[(i)] The Leray spectral sequence for the projection morphism $p:\BT\rightarrow \BG$ is given as
$$E_2^{s,t}\cong H^s(\BG, \overline{\Q}_\ell)\otimes_{\overline{\Q}_\ell} H^t(G/T, \overline{\Q}_\ell)\Rightarrow H^{s+t}(\BT, \overline{\Q}_\ell)$$
and degenerates at the $E_2$-page. In particular, the induced homomorphism $$H^*(\BT, \overline{\Q}_\ell)\rightarrow H^*(G/T, \overline{\Q}_\ell)$$ is an epimorphism. In fact, $H^*(G/T, \overline{\Q}_\ell)$ is the regular representation of the Weyl group $W$ and generated by the Chern classes of those invertible 
sheaves ${\mathcal L}_\chi$ obtained as pushouts of the $T$-torsor $G\rightarrow G/T$ for the characters $\chi: T\rightarrow \G_m$.
\item[(ii)] The homomorphism $H^*(\BG, \overline{\Q}_\ell)\rightarrow H^*(\BT, \overline{\Q}_\ell)$ induced by the projection morphism $p:\BT\rightarrow \BG$ induces an isomorphism
$$H^*(\BG, \overline{\Q}_\ell)\cong H^*(\BT, \overline{\Q}_\ell)^W.$$
\item[(iii)] There is an isomorphism of graded $\overline{\Q}_\ell$-algebras
$$H^*(\BG, \overline{\Q}_\ell)\cong {\overline\Q}_\ell[c_1, \ldots c_n]$$
where the $c_i\in H^{2d_i}(\BG, \overline{\Q}_\ell)$ are Chern class generators in even degrees $2d_i$.
\item[(iv)] The arithmetic Frobenius morphism $\Psi_q=\Frob_{\BG}^*$ acts on the $\ell$-adic cohomology algebra as follows:
$$
\begin{aligned}
\Psi_q(c_i)&= \varepsilon_iq^{-d_i}
c_i
\end{aligned}
$$
\end{itemize}
\end{theorem}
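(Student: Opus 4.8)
The plan is to reduce all four parts to three inputs: the classical structure of the $\ell$-adic cohomology of the flag variety $G/B$ over $\fq$; the preceding Borel-type computation, which already gives that $H^*(\BG,\overline{\Q}_\ell)$ is a polynomial algebra concentrated in even degrees; and Chevalley's theorem \cite{Che} on $W$-invariants. Part (iv) is then essentially a bookkeeping of Tate twists.

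\textbf{Part (i).} First I would note that $G/T\to G/B$ is a Zariski-locally trivial bundle with fibre the affine space $U$, so homotopy invariance of $\ell$-adic cohomology gives $H^*(G/T,\overline{\Q}_\ell)\cong H^*(G/B,\overline{\Q}_\ell)$. Over $\fq$ the Bruhat decomposition writes $G/B$ as a union of affine cells $BwB/B\cong\A^{\ell(w)}$ indexed by $w\in W$, so $H^*(G/B,\overline{\Q}_\ell)$ is free, concentrated in even degrees, and the $\ell$-adic form of Borel's presentation \cite{Bo,IlZh} identifies it with the coinvariant algebra of $W$ acting on $X^*(T)\otimes\overline{\Q}_\ell$ (placed in degree $2$), which as a $W$-representation is the regular representation. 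On the other hand, the preceding discussion gives $H^*(\BG,\overline{\Q}_\ell)\cong\Sym^*(N[-1])$ with $N$ concentrated in odd degrees, so $H^*(\BG,\overline{\Q}_\ell)$ lives in even degrees. Hence every term $E_2^{s,t}\cong H^s(\BG,\overline{\Q}_\ell)\otimes H^t(G/T,\overline{\Q}_\ell)$ of the Leray spectral sequence of $p$ sits in even total degree, all differentials vanish for parity reasons, and the sequence degenerates at $E_2$. Degeneration makes the edge homomorphism $H^*(\BT,\overline{\Q}_\ell)\to H^*(G/T,\overline{\Q}_\ell)$ surjective, and it sends a character $\chi\in X^*(T)\otimes\overline{\Q}_\ell=H^2(\BT,\overline{\Q}_\ell)$ to $c_1(\mathcal{L}_\chi)$; since $H^*(\BT,\overline{\Q}_\ell)$ is generated in degree $2$, the classes $c_1(\mathcal{L}_\chi)$ generate $H^*(G/T,\overline{\Q}_\ell)$.

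\textbf{Parts (ii) and (iii).} Here I would use that $W=N_G(T)/T$ acts on $\BT$ by conjugation (trivially on $T$, since $T$ is abelian) and that $p\colon\BT\to\BG$ is $W$-invariant, because conjugation by $N_G(T)$ is inner in $G$; hence $p^*$ has image in $H^*(\BT,\overline{\Q}_\ell)^W$. The Leray--Hirsch argument, valid since the spectral sequence degenerates, shows $H^*(\BT,\overline{\Q}_\ell)$ is a free $H^*(\BG,\overline{\Q}_\ell)$-module via $p^*$, so $p^*$ is injective. I would then compare Poincaré series. The algebra $H^*(\BT,\overline{\Q}_\ell)=\Sym^*(X^*(T)\otimes\overline{\Q}_\ell)$ with $X^*(T)$ in degree $2$ has series $(1-t^2)^{-n}$; by (i) the coinvariant algebra $H^*(G/T,\overline{\Q}_\ell)$ has series $\prod_{i=1}^n\frac{1-t^{2d_i}}{1-t^2}$; and multiplicativity of Poincaré series in the degenerate spectral sequence gives $P_{\BG}(t)=(1-t^2)^{-n}\big/\prod_{i=1}^n\frac{1-t^{2d_i}}{1-t^2}=\prod_{i=1}^n(1-t^{2d_i})^{-1}$. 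By Chevalley's theorem $H^*(\BT,\overline{\Q}_\ell)^W=\Sym^*(X^*(T)\otimes\overline{\Q}_\ell)^W=\overline{\Q}_\ell[I_1,\dots,I_n]$ with $\deg I_j=2d_j$, which has the same Poincaré series $\prod_{i=1}^n(1-t^{2d_i})^{-1}$. An injection of graded vector spaces that are finite-dimensional in each degree and have equal Poincaré series is an isomorphism, so $p^*$ is an isomorphism $H^*(\BG,\overline{\Q}_\ell)\xrightarrow{\sim}H^*(\BT,\overline{\Q}_\ell)^W$, which is (ii). Defining $c_i\in H^{2d_i}(\BG,\overline{\Q}_\ell)$ to be the unique class with $p^*(c_i)=I_i$ then gives $H^*(\BG,\overline{\Q}_\ell)\cong\overline{\Q}_\ell[c_1,\dots,c_n]$, which is (iii); these $c_i$ deserve the name Chern classes because each $I_i=p^*(c_i)$ is a polynomial in the $c_1(\mathcal{L}_\chi)$, and concretely they may be realised as Chern classes of the tautological bundle attached to a faithful representation $G\hookrightarrow\GL_N$.

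\textbf{Part (iv).} Since $p$ is defined over $\F_q$, the map $p^*$ commutes with $\Psi_q$, so it suffices to compute $\Psi_q$ on $I_i\in H^{2d_i}(\BT,\overline{\Q}_\ell)$. On $\BGm$ the degree-$2$ generator is of Tate type, with the arithmetic Frobenius acting by $q^{-1}$, exactly as in the guiding example; hence the arithmetic Frobenius acts on $H^2(\BT,\overline{\Q}_\ell)=X^*(T)\otimes\overline{\Q}_\ell$ as its natural action on characters, whose eigenvalues are the roots of unity $\varepsilon'_j$, times $q^{-1}$, and on $H^{2k}(\BT,\overline{\Q}_\ell)=\Sym^k(X^*(T)\otimes\overline{\Q}_\ell)$ as the induced symmetric-power action times $q^{-k}$. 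Since $I_i$ is homogeneous of degree $d_i$ and an eigenvector of the Frobenius action on $\Sym^*(X^*(T)\otimes\overline{\Q}_\ell)$ with eigenvalue $\varepsilon_i$, we obtain $\Psi_q(I_i)=\varepsilon_i q^{-d_i}I_i$, and injectivity of the Frobenius-equivariant map $p^*$ forces $\Psi_q(c_i)=\varepsilon_i q^{-d_i}c_i$. The main obstacle I anticipate is not these formal steps but ensuring the classical facts about $G/B$ are genuinely available in the $\ell$-adic setting over $\fq$---freeness and even concentration of $H^*(G/B,\overline{\Q}_\ell)$, the Borel presentation as a coinvariant algebra, and the regular-representation description of the Weyl-group action---and keeping the Tate-twist normalisation consistent with the arithmetic (not geometric) Frobenius used throughout.
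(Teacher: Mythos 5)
Your proof is correct and follows essentially the same route as the paper: Bruhat decomposition forces even concentration of $H^*(G/T,\overline{\Q}_\ell)$, parity forces $E_2$-degeneration, the pushout construction gives the Chern-class generators, and Chevalley's theorem identifies $H^*(\BT,\overline{\Q}_\ell)^W$. The only small deviations are cosmetic: you compare $G/T$ with $G/B$ directly via the $U$-torsor $G/T\to G/B$, whereas the paper transfers cohomology through the chain $\BT\to\BB\to\BG$ using that $\BU$ has the cohomology of a point; and where the paper dismisses surjectivity in (ii) with ``for dimensional reasons,'' you spell this out via Leray--Hirsch (freeness of $H^*(\BT,\overline{\Q}_\ell)$ over $p^*H^*(\BG,\overline{\Q}_\ell)$, hence injectivity of $p^*$) plus a Poincar\'e-series comparison, which is a cleaner and more self-contained justification. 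One sentence is misleadingly phrased, though not wrong in substance: the $W$-action on $\BT$ is not ``trivial on $T$'' --- rather, the conjugation action of $N_G(T)$ on $T$ restricts trivially to the subgroup $T$ because $T$ is abelian, and therefore descends to a genuine (and generally nontrivial) action of $W=N_G(T)/T$ on $T$ and hence on $\BT$; you should rewrite that parenthetical accordingly.
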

\begin{proof}
These results can be derived from the analogous classical results by Borel for connected compact Lie groups by lifting $G$ to characteristic $0$ following the scholium of \cite[Sommes trig. 8.2.]{SGA4.5} (see also \cite[Example 4.11.]{IlZh}). The degeneration of the spectral sequence 
$$E_2^{s,t}\cong H^s(\BG, \overline{\Q}_\ell)\otimes_{\overline{\Q}_\ell} H^t(G/T, \overline{\Q}_\ell)\Rightarrow H^{s+t}(\BT, \overline{\Q}_\ell)$$
at $E_2$ results because $E_2^{s, t}=0$ if $s$ or $t$ is odd.  This follows from the Bruhat decomposition for the flag variety $G/B$ for a Borel subgroup $B$ containing $T$, which implies that $H^j(G/T, \overline{\Q}_\ell)=0$ if $j$ is odd. The spectral sequence inherits an action of the Weyl group $W$. It acts trivially on $H^*(\BG, \overline{\Q}_\ell)$ and via character theory as the regular representation on $H^*(G/T, \overline{\Q}_\ell)$. Taking the pushout of the principal $T$-bundle or $T$-torsor $T\to G \to G/T$ along a character $\chi \colon T \to \G_m$ as in the following diagram
$$\xymatrix{
T \ar[d] \ar[r]^\chi & \G_m \ar[d] \\ G \ar[r] \ar[d] & E \ar[d] \\ G/T \ar@{=}[r] & G/T
}$$
gives a line bundle over $G/T$ and the Chern classes of all these line bundles corresponding to such a character generate the cohomology $H^*(G/T, \overline{\Q}_\ell)$ (compare also \cite[Example 4.11(c)]{IlZh}.
Furthermore, the morphism $\BT\rightarrow \BB$ between classifying stacks induced by the inclusion of the maximal torus $T$ into the Borel subgroup $B$ has as fibres the classifying stack $\BU$ of the unipotent radical subgroup $U$ of $B$.  As $U$ is ${\mathbb A}^n$ all higher cohomology vanishes and we get an induced isomorphism in cohomology. The fibres of the morphism $\BB\rightarrow \BG$ are given by the flag varieties $G/B$ and therefore the morphism $\BT\rightarrow \BG$ induces a monomorphism 
$$H^*(\BG, \overline{\Q}_\ell)\rightarrow H^*(\BT, \overline{\Q}_\ell)$$
which lands inside the polynomial invariants of the Weyl group $W$ and for dimensional reasons we get an isomorphism 
$$H^*(\BG, \overline{\Q}_\ell)\cong H^*(\BT, \overline{\Q}_\ell)^W.$$
The statement on the polynomial generators for the cohomology of $\BG$ and the action of the arithmetic Frobenius morphism holds first of all for tori. This follows using the K\"unneth theorem and because $H^*(\B\G_m, \overline{\Q}_\ell)\cong \overline{\Q}_\ell [c_1]$ with $\deg(c_1)=2$, $d_1=1$ and action of the arithmetic Frobenius $\Psi_q(c_1)=\varepsilon_1q^{-1}$. If $T$ is a maximal torus, i.e. $T\cong \G_m^n$, then for the cohomology algebra of the classifying stack we have $H^*(\BT, \overline{\Q}_\ell)\cong \overline{\Q}_\ell[t_1, \ldots t_n]$, where the generators $t_i$ have degrees $\deg(t_i)=2$ for all $i=1, \ldots n$. The Weyl group invariants $H^*(\BT, \overline{\Q}_\ell)^W$ form itself a polynomial algebra $\overline{\Q}_\ell[c_1, \ldots c_n]$ on homogeneous generators of degrees $2d_i$. 
Using the fact that the degrees $d_i$ for $i=1, \ldots, n$ of the homogeneous polynomial generators of the Weyl group invariants are precisely the degrees of the homogeneous generators of 
$H^*(\BT, \overline{\Q}_\ell)^W$ then implies the desired result on the cohomology of the classifying stack $\BG$ with the described action of the arithmetic Frobenius morphism (compare also \cite[9.1.4]{De2}, \cite{HeSch}).
\end{proof}

This allows us now to calculate the number of $\F_q$-rational points for the classifying stack of a given connected reductive algebraic group using the Behrend-Grothendieck-Lefschetz trace formula, which will be instrumental for our explicit calculations in the case of Chevalley groups.

\begin{theorem}\label{BGpoints}
Let $G$ be a connected reductive algebraic group over the field $\F_q$ of rank $\rk(G)=n$. Then the number of $\F_q$-rational points of $\BG$
is given as
$$\#\BG(\F_q)=q^{-\dim(G)} \prod_{i=1}^n {(1 -\varepsilon_iq^{-d_i}})^{-1}.$$
\end{theorem}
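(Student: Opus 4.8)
The plan is to apply the Behrend--Grothendieck--Lefschetz trace formula (Theorem~\ref{Trace}) to the stack $\Xx = \BG$, feeding in the structure of $H^*(\BG,\overline{\Q}_\ell)$ and the Frobenius action supplied by Theorem~\ref{CohBG2}. First I would record the relevant dimension: since $\BG = [\Spec(\F_q)/G]$ is a quotient stack, $\dim(\BG) = \dim(\Spec(\F_q)) - \dim(G) = -\dim(G)$, so the normalising factor $q^{\dim(\BG)}$ in the trace formula is exactly the prefactor $q^{-\dim(G)}$ appearing in the claim.

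Next I would unwind the alternating trace. By Theorem~\ref{CohBG2}(iii) we have $H^*(\BG,\overline{\Q}_\ell) \cong \overline{\Q}_\ell[c_1,\ldots,c_n]$ with each $c_i$ in even degree $2d_i$; in particular the odd cohomology vanishes, so
$$\sum_{s\geq 0}(-1)^s\,\tr(\Psi_q\mid H^s(\BG,\overline{\Q}_\ell)) \;=\; \sum_{k\geq 0}\tr(\Psi_q\mid H^{2k}(\BG,\overline{\Q}_\ell)).$$
Since $\Psi_q = \Frob_{\BG}^*$ is a homomorphism of graded $\overline{\Q}_\ell$-algebras, the monomials $c_1^{a_1}\cdots c_n^{a_n}$ with $(a_1,\ldots,a_n)\in\Z_{\geq 0}^n$ form an eigenbasis of $H^*(\BG,\overline{\Q}_\ell)$, with $\Psi_q(c_1^{a_1}\cdots c_n^{a_n}) = \big(\prod_{i=1}^n(\varepsilon_i q^{-d_i})^{a_i}\big)\,c_1^{a_1}\cdots c_n^{a_n}$ by Theorem~\ref{CohBG2}(iv). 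Summing these eigenvalues over all multi-indices and factoring into $n$ geometric series gives
$$\sum_{k\geq 0}\tr(\Psi_q\mid H^{2k}(\BG,\overline{\Q}_\ell)) \;=\; \sum_{(a_1,\ldots,a_n)\in\Z_{\geq 0}^n}\ \prod_{i=1}^n(\varepsilon_i q^{-d_i})^{a_i} \;=\; \prod_{i=1}^n\frac{1}{1-\varepsilon_i q^{-d_i}},$$
whence $\#\BG(\F_q) = q^{\dim(\BG)}\prod_{i=1}^n(1-\varepsilon_i q^{-d_i})^{-1} = q^{-\dim(G)}\prod_{i=1}^n(1-\varepsilon_i q^{-d_i})^{-1}$, as desired.

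The step requiring genuine care — and the main (if mild) obstacle — is the convergence bookkeeping, exactly as in the guiding $\Bb\G_m$ example: the cohomology of $\BG$ is infinite-dimensional, so the trace formula must be read as an identity of convergent series in the sense of Behrend, and one must check that the sum over $\Z_{\geq 0}^n$ converges absolutely and may therefore be regrouped by total degree $k = \sum_i a_i d_i$ (within which it is a finite sum) and then factored into a product. This is immediate once one notes $|\varepsilon_i q^{-d_i}| = q^{-d_i} < 1$ for every $i$, each $\varepsilon_i$ being a root of unity and $d_i \geq 1$, so the absolute series equals $\prod_i(1-q^{-d_i})^{-1} < \infty$. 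I would also remark that the diagonal action of $\Psi_q$ on the monomial basis is not an extra hypothesis but follows from $\Psi_q$ being the pullback along the morphism of stacks $\Frob_{\BG}$, hence multiplicative for the cup product, combined with Theorem~\ref{CohBG2}(iv). As a consistency check, comparing with Proposition~\ref{BGvsG}, which gives $\#\BG(\F_q) = 1/\#G(\F_q)$, recovers Steinberg's product formula $\#G(\F_q) = q^{\dim(G)}\prod_{i=1}^n(1-\varepsilon_i q^{-d_i})$, the corollary promised in the introduction.
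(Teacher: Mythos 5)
Your proof is correct and follows essentially the same route as the paper: apply the Behrend--Grothendieck--Lefschetz trace formula with $\dim(\BG)=-\dim(G)$, use Theorem~\ref{CohBG2}(iii)--(iv) to see the cohomology is a polynomial ring on even-degree generators with $\Psi_q$ acting by $\varepsilon_i q^{-d_i}$ on $c_i$, and factor the resulting sum into $n$ geometric series. The only material difference is that you spell out the details the paper leaves implicit (monomials form an eigenbasis because $\Psi_q$ is an algebra homomorphism, absolute convergence justifying the regrouping, and the consistency check against Proposition~\ref{BGvsG}); these are welcome elaborations but not a different argument.
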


\begin{proof} We have $\dim(\BG)=-\dim(G)$ for the classifying stack $\BG$ of $G$.  From the Behrend-Grothendieck-Lefschetz trace formula \ref{Trace} we obtain from \ref{CohBG2} using the explicit eigenvalues of the arithmetic Frobenius morphism for an adequate choice of a basis
$$
\begin{aligned}
\#\BG(\F_q) &= q^{\dim (\BG)}\sum_{s\geq 0}(-1)^s \tr(\Psi_q |H^s(\BG, \overline{\Q}_\ell))\\
&= q^{-\dim(G)}\sum_{s\geq 0}\tr(\Psi_q |H^{2s}(\BG, \overline{\Q}_\ell))\\
&=q^{-\dim(G)}\prod_{i=1}^n (1 +\varepsilon_i q^{-d_i} + (\varepsilon_i q^{-{d_i}})^2 +(\varepsilon_i q^{-{d_i}})^3 +\cdots)\\
&=q^{-\dim(G)} \prod_{i=1}^n {(1 -\varepsilon_iq^{-d_i}})^{-1}.
\end{aligned}
$$
This is the desired formula for the number of $\F_q$-rational points of the classifying stack.
\end{proof}

As a corollary we get immediately the following theorem of Steinberg (see \cite[11.16]{St} or \cite[Theorem 25]{St3}), which calculates the order of the finite group $G(\F_q)$ of $\F_q$-rational points of the algebraic group $G$.

\begin{theorem}[Steinberg]\label{Steinberg}
Let $G$ be a connected reductive algebraic group over the field $\F_q$ of rank $\rk(G)=n$. Then the number of $\F_q$-rational points of $G$
is given as
$$\#G(\F_q)= q^{\dim (G)}\prod_{i=1}^n (1-\varepsilon_i q^{-d_i}).$$
\end{theorem}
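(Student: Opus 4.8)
The statement to prove is Steinberg's formula for $\#G(\F_q)$, and the excerpt has already set up everything needed to get it as a one-line consequence of Theorem~\ref{BGpoints}. So the approach is simply to combine Proposition~\ref{BGvsG} with Theorem~\ref{BGpoints}.

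First I would invoke Proposition~\ref{BGvsG} in its special case: for the classifying stack $\BG$ of the connected algebraic group $G$, one has
$$\#\BG(\F_q)=\frac{1}{\#G(\F_q)}.$$
This uses that $G$ is connected (so Lang's theorem applies and the groupoid $\BG(\F_q)$ has a single isomorphism class with automorphism group $G(\F_q)$), and reductive groups are in particular connected by hypothesis. Hence $\#G(\F_q)=\bigl(\#\BG(\F_q)\bigr)^{-1}$.

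Next I would substitute the explicit value of $\#\BG(\F_q)$ supplied by Theorem~\ref{BGpoints}, namely
$$\#\BG(\F_q)=q^{-\dim(G)}\prod_{i=1}^{n}(1-\varepsilon_i q^{-d_i})^{-1},$$
and take the reciprocal. Since reciprocation turns the leading factor $q^{-\dim(G)}$ into $q^{\dim(G)}$ and each factor $(1-\varepsilon_i q^{-d_i})^{-1}$ into $(1-\varepsilon_i q^{-d_i})$, we obtain
$$\#G(\F_q)=q^{\dim(G)}\prod_{i=1}^{n}(1-\varepsilon_i q^{-d_i}),$$
which is exactly the claimed formula.

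There is essentially no obstacle here: the content has already been done in Theorem~\ref{CohBG2} (computing the cohomology of $\BG$ and the Frobenius eigenvalues) and Theorem~\ref{BGpoints} (summing the trace-formula geometric series). The only thing to be careful about is that the reciprocal of the formal power series $\prod_i(1-\varepsilon_i q^{-d_i})^{-1}$ is the genuine finite product $\prod_i(1-\varepsilon_i q^{-d_i})$ — i.e. that the groupoid cardinality $\#\BG(\F_q)$, a priori an infinite sum $q^{-\dim(G)}\sum_{i\ge 0}\cdots$, really does converge to that closed-form rational number so that inverting it is legitimate; but this convergence and the closed form are precisely the content of the chain of equalities already established in the proof of Theorem~\ref{BGpoints}. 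So the proof is just: cite Proposition~\ref{BGvsG}, cite Theorem~\ref{BGpoints}, invert.
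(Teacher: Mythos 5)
Your proof is correct and matches the paper's own argument exactly: the paper also derives Steinberg's formula in one line by combining Proposition~\ref{BGvsG} with Theorem~\ref{BGpoints} and taking the reciprocal. Your added remark about the convergence of the geometric series being legitimized by the proof of Theorem~\ref{BGpoints} is a sensible point of care, though the paper leaves it implicit.
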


\begin{proof} The formula follows at once from \ref{BGpoints} using \ref{BGvsG} 
$$\#G(\F_q)= \frac{1}{\#\BG(\F_q)}=q^{\dim (G)}\prod_{i=1}^n (1-\varepsilon_i q^{-d_i}).$$
\end{proof}

In the final section we will now use these formulas to calculate the number of $\F_q$-rational points and the related zeta functions for classifying stacks of certain algebraic groups intimately related to finite groups of Lie type. 

\section{Applications to Chevalley schemes and finite groups of Lie type}\label{sec:chev}

Let $G$ be a connected compact Lie group. Associated to $G$ is a reductive complex algebraic group $G(\C)$, the complexification of $G$, which can be constructed as the algebraic group of $\C$-rational points of a group scheme $G_{\C}$ over $\C$ obtained via base change from the associated integral affine Chevalley group scheme $G_\Z$, i.e.,
$$G_{\C}=G_{\Z}\times_{\Spec(\Z)}\Spec(\C).$$
In fact, for any field $k$, taking the $k$-rational points of the Chevalley group scheme $G_k$ over $k$ given via base change as
$$G_k=G_{\Z}\times_{\Spec(\Z)}\Spec(k)$$
we obtain the Chevalley group
$$G(k)=\Hom_{\Sch/k}(\Spec(k), G_k),$$
where $\Sch/k$ is the category of schemes over the field $k$. 

We are interested here in particular in the finite Chevalley group $G(\F_q)$
of $\F_q$-rational points, which can be constructed also as the fixed point set 
$$G(\F_q)=G(\overline{\F}_q)^{\psi_q}$$
of the Frobenius morphism
$$\psi_q: G(\overline{\F}_q)\rightarrow G(\overline{\F}_q).$$
Here $\psi_q$ is the arithmetic Frobenius homomorphism induced by the classical Frobenius homomorphism$$\Frob: \overline{\F}_q \rightarrow \overline{\F}_q, \,\, a\mapsto a^q.$$ See the references \cite{Dm, DmGr,  M2} for more information on the general theory of reductive group schemes.

The goal of this section is to compute the number of $\F_q$-rational points and the zeta functions for the classifying stacks of Chevalley group schemes over $\F_q$. Recall that such groups are determined by their complex dimension and the degrees of the fundamental Weyl group invariants (see for example \cite{Ca1}). We summarise the necessary data in Table \ref{tab1}. Our first aim is to rewrite Steinberg's theorem (Theorem~\ref{Steinberg}) in terms of this data.

 \begin{table}[h]
\centering
\label{fivs}
\begin{tabular}{|c|c|c|@{}m{0pt}@{}}
\hline
{\bf  Group} & {\bf  $\text{Dim}/\mathbb{C}$} & {\bf  Degrees of Weyl group invariants}&\\  \hline
$SL_{n+1}$ (Type $A_n$) & $n(n+2)$ & $2,3,\dots,n+1$ & \\[10pt] \hline 
$SO_{2n+1}$ (Type $B_n$)  & $n(2n+1)$ & $2,4, \dots , 2n$ & \\[10pt] \hline
$Sp_{2n}$ (Type $C_n$) & $n(2n+1)$ & $2,4 \dots , 2n$ & \\[10pt] \hline
$SO_{2n}$ (Type $D_n$) & $n(2n-1)$ &  $n, 2, 4, \dots , 2n-2$ & \\[10pt] \hline
$G_2$ & 14 &       $2,6$ &  \\[10pt] \hline
$F_4$ & 52 &     $2,6,8,12$ & \\[10pt] \hline
$E_6$ & 78 &     $2, 5, 6, 8, 9, 12$ & \\[10pt] \hline
$E_7$ & 133 &      $2, 6, 8, 10, 12, 14, 18$ & \\[10pt] \hline
$E_8$ & 248 &        $2, 8, 12, 14, 18, 20, 24, 30$ & \\[10pt] \hline
\end{tabular}\caption{The Weyl group data for the Chevalley groups.}\label{tab1}
\end{table}
 
 \begin{proposition}\label{chevalleycorrollary}
 Let $G$ be a Chevalley group scheme over $\mathbb{F}_q$. Then we have
 $$ \# \mathscr{B}G(\mathbb{F}_q) = \sum_{i_1=0}^\infty \cdots \sum_{i_r=0}^\infty q^{-(i_1d_1 + \cdots + i_rd_r + \dim(G))},$$
 where the $d_i$ are the degrees of the Weyl group invariants and $r$ is the rank of $G$.
 \end{proposition}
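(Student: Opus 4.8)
The plan is to specialize Theorem~\ref{BGpoints} to the split case and then expand the resulting product into a multiple geometric series. The one input that needs to be recorded is that a Chevalley group scheme is \emph{split} over $\F_q$. Indeed, $G$ is obtained by base change from the integral Chevalley form $G_\Z$, so it contains a split maximal torus $T\cong\G_m^n$ defined over $\F_q$, where $n=r=\rk(G)$. Hence the arithmetic Frobenius acts trivially on the cocharacter lattice $X_*(T)$ and on $X^*(T)$, so the roots of unity $\varepsilon'_1,\dots,\varepsilon'_n$ appearing in Section~\ref{sec:coh} are all equal to $1$; and since the Weyl group $W$ together with its action on $X_*(T)$ is already defined over $\Z$, the fundamental invariants $I_1,\dots,I_n$ of $\Sym^*(X_*(T)\otimes\C)^W$ may be chosen over the prime field, so the induced Frobenius eigenvalues satisfy $\varepsilon_1=\dots=\varepsilon_n=1$ as well.

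With this in hand, Theorem~\ref{BGpoints} gives
\[
\#\BG(\F_q)=q^{-\dim(G)}\prod_{i=1}^{n}\bigl(1-q^{-d_i}\bigr)^{-1}.
\]
For each $i$ we have $d_i\geq 1$ and $q\geq 2$, so $q^{-d_i}<1$ and $\bigl(1-q^{-d_i}\bigr)^{-1}=\sum_{i_j=0}^{\infty}q^{-i_jd_j}$ is an absolutely convergent series with positive terms. A finite product of such series may be multiplied out term by term and the resulting $r$-fold series summed in any order; collecting the contributions yields
\[
\#\BG(\F_q)=q^{-\dim(G)}\prod_{j=1}^{r}\ \sum_{i_j=0}^{\infty}q^{-i_jd_j}
=\sum_{i_1=0}^{\infty}\cdots\sum_{i_r=0}^{\infty}q^{-(i_1d_1+\cdots+i_rd_r+\dim(G))},
\]
which is the asserted formula.

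The only genuinely substantive step is the first one — identifying all the Frobenius eigenvalues $\varepsilon_i$ as trivial — and this is precisely where the hypothesis that $G$ is a Chevalley (split) group, rather than an arbitrary connected reductive group over $\F_q$, enters; the twisted groups treated later in this section will reintroduce nontrivial $\varepsilon_i$. The remaining manipulation is the same bookkeeping as in the $\G_m$-example of Section~\ref{Sec1}, now carried out in $r$ variables, and needs nothing beyond absolute convergence. (Alternatively, one could bypass Theorem~\ref{BGpoints} and instead combine Proposition~\ref{BGvsG}, which gives $\#\BG(\F_q)=1/\#G(\F_q)$, with the classical order formula $\#G(\F_q)=q^{\dim(G)}\prod_i(1-q^{-d_i})$ for split groups; invoking Theorem~\ref{BGpoints} keeps the argument self-contained within the framework developed above and exhibits the formula as a ``stacky'' refinement of that count.)
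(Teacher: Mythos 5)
Your proposal is correct and follows essentially the same route as the paper: invoke Theorem~\ref{CohBG2}/\ref{BGpoints} for the connected reductive group $G$, identify the $d_i$ with the degrees of the fundamental Weyl-group invariants, observe that the eigenvalues $\varepsilon_i$ are trivial because a Chevalley scheme is split, and expand the resulting finite product $\prod_i(1-q^{-d_i})^{-1}$ as an $r$-fold geometric series. The only difference is that you make explicit the step the paper leaves implicit — namely why the split hypothesis forces $\varepsilon_1=\cdots=\varepsilon_r=1$ — which is a welcome clarification, since that is exactly the point where the twisted Steinberg groups treated later in Section~\ref{Sec3} part ways with this proposition.
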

 
 \begin{proof}
We start by noting that these Chevalley group schemes are connected reductive algebraic groups over $\mathbb{F}_q$. Therefore, from Theorem~\ref{CohBG2} we have isomorphisms $H^*(\BG, \overline{\Q}_\ell)\cong H^*(\BT, \overline{\Q}_\ell)^W\cong {\overline\Q}_\ell[c_1, \ldots c_r]$. In particular the cohomology generators $c_i$ are in degree $2 d_i$, where the $d_i $ are the degrees of the Weyl group invariants. Comparing this with Theorem~\ref{BGpoints}, we see that these degrees of the Weyl group invariants are exactly the $d_i$ as in Steinberg's Theorem~\ref{Steinberg}. The result then follows from~\ref{CohBG2} as we can identify the action of the arithmetic Frobenius on the $\ell$-adic cohomology algebra of the classifying stack $\BG$ in terms of the degrees $d_i$ of the Weyl group invariants.
 \end{proof}
 
\begin{example}\label{compexample}
Let $G$ be the exceptional group $G_2$.  We can use Proposition~\ref{chevalleycorrollary} to easily compute $\# \mathscr{B}G_2 (\mathbb{F}_q)$ using the fact that $\dim(G) = 14$ and it has Weyl group invariants in degrees 2 and 6 respectively:
$$\#\mathscr{B}G_2(\mathbb{F}_q) =  \sum^\infty_{i_1=0} \sum^\infty_{i_2=0} q^{-(2i_1+6i_2+14)}.$$
One can computationally check that this infinite sum does indeed converge to 
$$\left(q^{14} (1-q^{-2})(1-q^{-6}) \right)^{-1} =  \#G_2(\mathbb{F}_q) ^{-1}$$
as expected.
\end{example}

From Proposition~\ref{chevalleycorrollary}, along with some manipulation of complex functions, we can immediately give a general form for the zeta function of the classifying stack.

 \begin{proposition}\label{chevalleycorrollary2}
 Let $G$ be a Chevalley group scheme over $\mathbb{F}_q$. Then
$$\zeta_{\mathscr{B}G(\mathbb{F}_q)}(t) = \prod_{k_1=1}^\infty \cdots \prod_{k_r=1}^\infty \left(1-q^{-(k_1d_1 + \cdots + k_rd_r + \left(\dim(G)-\Sigma_{i=1}^r d_i)\right)} t \right)^{-1},$$
 where the $d_i$ are the degrees of the Weyl group invariants.
 \end{proposition}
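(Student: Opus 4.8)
The plan is to imitate, with $r$ summation indices in place of one, the computation already carried out for the multiplicative group in the guiding example of Section~\ref{Sec1}. First I would apply Proposition~\ref{chevalleycorrollary} not over $\F_q$ but over each finite extension $\F_{q^i}$: since the Chevalley group scheme over $\F_{q^i}$ is again obtained from $G_\Z$ by base change, it has the same rank $r$, the same $\dim(G)$, and the same degrees $d_1,\dots,d_r$ of the Weyl group invariants, so the proposition (with $q$ replaced by $q^i$) gives
$$\#\mathscr{B}G(\F_{q^i}) = \sum_{k_1=0}^\infty\cdots\sum_{k_r=0}^\infty q^{-i(k_1 d_1 + \cdots + k_r d_r + \dim(G))} = q^{-i\dim(G)}\prod_{j=1}^r\frac{1}{1 - q^{-i d_j}},$$
the last equality obtained by summing the geometric series in each variable separately.

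Next I would substitute this into the definition of the zeta function, $\zeta_{\mathscr{B}G}(t) = \exp\big(\sum_{i\geq 1}\#\mathscr{B}G(\F_{q^i})\,t^i/i\big)$, and interchange the summations over $i$ and over $(k_1,\dots,k_r)$, so that the exponent becomes $\sum_{k_1,\dots,k_r\geq 0}\sum_{i\geq 1}\big(q^{-(k_1 d_1 + \cdots + k_r d_r + \dim(G))}t\big)^i/i$. Collapsing each inner series by the identity $\exp\big(\sum_{i\geq 1}x^i/i\big) = (1-x)^{-1}$ turns the exponential of a sum into a product and yields
$$\zeta_{\mathscr{B}G}(t) = \prod_{k_1=0}^\infty\cdots\prod_{k_r=0}^\infty\big(1 - q^{-(k_1 d_1 + \cdots + k_r d_r + \dim(G))}t\big)^{-1}.$$
Finally I would shift each product index up by one — replacing the product over $k_j\geq 0$ by one over $k_j\geq 1$ — which replaces $k_1 d_1 + \cdots + k_r d_r + \dim(G)$ by $k_1 d_1 + \cdots + k_r d_r + \big(\dim(G) - \sum_{j=1}^r d_j\big)$; this is exactly the asserted formula. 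As a sanity check, the case $r=1$, $\dim(G) = d_1 = 1$ recovers $\zeta_{\Bb\G_m}(t) = \prod_{k\geq 1}(1 - q^{-k}t)^{-1}$ from the guiding example.

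There is no serious obstacle here — this step is essentially bookkeeping — but one point deserves a word: the legitimacy of interchanging the sums over $i$ and over $(k_1,\dots,k_r)$ and of the subsequent product rearrangement. This is justified because, for $t$ in a small disc about the origin, each term is bounded in absolute value by $|t|^i q^{-i\dim(G)}$ with $q^{-\dim(G)} < 1$ (as $\dim(G)\geq 1$), so the multiple series converges absolutely; alternatively the whole manipulation may be read as an identity of formal power series in $t$. It is worth remarking, in line with Behrend's theorem on zeta functions of quotient stacks by linear algebraic groups recalled in Section~\ref{Sec1} (here $\mathscr{B}G = [\Spec(\F_q)/G]$ with $G$ linear algebraic), that the resulting infinite product converges to a meromorphic function of $t$, with poles at the points $t = q^{\,k_1 d_1 + \cdots + k_r d_r + \dim(G) - \sum_{j=1}^r d_j}$ for $k_1,\dots,k_r\geq 1$.
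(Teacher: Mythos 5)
Your proof is correct and follows essentially the same route as the paper: substitute the expression from Proposition~\ref{chevalleycorrollary} into the definition of the zeta function, interchange summations, collapse the inner series via $\exp\bigl(\sum_{i\geq 1}x^i/i\bigr)=(1-x)^{-1}$, and shift each index by one. The only cosmetic difference is the order of the last two steps (the paper shifts indices inside the exponential before collapsing, while you collapse first and then shift); the remarks on absolute convergence and meromorphic continuation are a welcome addition but not part of the paper's argument.
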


\begin{proof}
This result is an exercise in manipulating complex functions. Recall that we have
$$\zeta_{\mathscr{X}}(t) = \exp \left(  \sum^\infty_{i = 1}  \# \mathscr{X}(\mathbb{F}_{q^i}) \frac{t^i}{i} \right).$$
We then use the result of Proposition~\ref{chevalleycorrollary} to substitute in the value of $\# \mathscr{X}(\mathbb{F}_{q^i})$
 $$= \exp  \left(  \sum^\infty_{i = 1}  \sum_{k_1=0}^\infty \cdots \sum_{k_r=0}^\infty q^{-i(k_1d_1 + \cdots + k_rd_r + \dim(G))} \frac{t^i}{i} \right),$$
Next we adjust the indices $k_s$ so they start at $1$ instead of $0$
$$= \exp  \left(  \sum^\infty_{i = 1}  \sum_{k_1=1}^\infty \cdots \sum_{k_r=1}^\infty q^{-i(k_1d_1 + \cdots + k_rd_r + \dim(G) - \sum^r_{s=1} d_s)} \frac{t^i}{i} \right).$$
Finally, we follow the manipulation for the case of $\mathbb{G}_m$ to get the desired result. In particular we have
\begin{align*}
= & \prod_{k_1=1}^\infty \cdots \prod_{k_r=1}^\infty \exp (\sum^\infty_{i=1} q^{-i(k_1d_1 + \cdots + k_rd_r + \dim(G) - \sum^r_{s=1} d_s)} \frac{t^i}{i})\\
=& \prod_{k_1=1}^\infty \cdots \prod_{k_r=1}^\infty \left(1-q^{-(k_1d_1 + \cdots + k_rd_r + \left(\dim(G)-\Sigma_{s=1}^r d_s)\right)} t \right)^{-1},
\end{align*}
which is the desired expression.
\end{proof}

%
%

We now use Proposition~\ref{chevalleycorrollary} and Proposition~\ref{chevalleycorrollary2} along with the datum of Table~\ref{tab1} to compute systematically the number of $\mathbb{F}_q$-rational points of the classifying stacks $\BG$ of all the classical, exceptional and twisted Chevalley group schemes $G$. We also give the corresponding zeta functions. The groups $G(\F_q)$ of $\F_q$-rational points of $G$ are finite groups of Lie type and their orders are well known and can be found for example in~\cite[\S 1.5]{Hum}. From these orders, we can then compute the invariants needed for the computation of the numbers of $\mathbb{F}_q$-rational points of the classifying stacks and the corresponding zeta functions. 

\subsection{Classical and exceptional Chevalley groups}

We will first discuss the classical and exceptional Chevalley groups, as constructed by Chevalley and which give rise to the untwisted family of finite groups of Lie type (see \cite{Che1}, \cite{Ca2}).

\subsubsection{Classical Chevalley Groups}\leavevmode

\underline{$G=SL_{n+1}$}

\begin{align*}
\# G (\mathbb{F}_q) &=q^{n(n+2)}(1-q^{-2})(1-q^{-3}) \cdots (1-q^{-(n+1)})\\
\#\mathscr{B}G(\mathbb{F}_q) &= \sum^\infty_{i_1=0} \sum^\infty_{i_2=0} \cdots \sum^\infty_{i_n=0} q^{-(2i_1 + 3i_2 + \cdots + (n+1)i_n + n(n+2))}\\
\zeta_{\mathscr{B}G}(t) &= \prod_{k_1=1}^\infty \prod_{k_2=1}^\infty \cdots \prod_{k_n=1}^\infty \left(1-q^{-\left(2k_1 + 3k_2 + \cdots + (n+1)k_n + \frac{n(n+1)}{2} \right)} t \right)^{-1}
\end{align*}

\underline{$G=SO_{2n+1}$}

\begin{align*}
\# G (\mathbb{F}_q) &=q^{n(2n+1)}(1-q^{-2})(1-q^{-4}) \cdots (1-q^{-2n})\\
\#\mathscr{B}G(\mathbb{F}_q) &=  \sum^\infty_{i_1=0} \sum^\infty_{i_2=0} \cdots \sum^\infty_{i_n=0} q^{-(2i_1 + 4i_2 + \cdots + 2ni_n + n(2n+1))}\\
\zeta_{\mathscr{B}G}(t) &= \prod_{k_1=1}^\infty \prod_{k_2=1}^\infty \cdots \prod_{k_n=1}^\infty \left(1-q^{-\left(2k_1 + 4k_2 + \cdots + 2nk_n +  n^2 \right)} t \right)^{-1}
\end{align*}

\underline{$G=Sp_{2n}$}

\begin{align*}
\# G (\mathbb{F}_q) &=q^{n(2n+1)}(1-q^{-2})(1-q^{-4}) \cdots (1-q^{-2n})\\
\#\mathscr{B}G(\mathbb{F}_q) &=  \sum^\infty_{i_1=0} \sum^\infty_{i_2=0} \cdots \sum^\infty_{i_n=0} q^{-(2i_1 + 4i_2 + \cdots + 2ni_n + n(2n+1))}\\
\zeta_{\mathscr{B}G}(t) &= \prod_{k_1=1}^\infty \prod_{k_2=1}^\infty \cdots \prod_{k_n=1}^\infty \left(1-q^{-\left(2k_1 + 4k_2 + \cdots + 2nk_n  + n^2 \right)} t \right)^{-1}
\end{align*}

\underline{$G=SO_{2n}$}

\begin{align*}
\# G (\mathbb{F}_q) &=q^{n(2n-1)}(1-q^{-n})(1-q^{-2})(1-q^{-4})  \cdots (1-q^{-(2n-2)})\\
\#\mathscr{B}G(\mathbb{F}_q) &=  \sum^\infty_{i_1=0} \sum^\infty_{i_2=0} \cdots \sum^\infty_{i_n=0} q^{-(ni_1 + 2i_2 + 4i_3 + \cdots + (2n-2)i_n + n(2n-1))}\\
\zeta_{\mathscr{B}G}(t) &= \prod_{k_1=1}^\infty \prod_{k_2=1}^\infty \cdots \prod_{k_n=1}^\infty \left(1-q^{-\left(nk_1 + 2k_2 + 4k_3 + \cdots + (2n-2)k_n + n(n-1) \right)} t \right)^{-1}
\end{align*}

\subsubsection{Exceptional Chevalley Groups}\leavevmode

\underline{$G=G_2$}

\begin{align*}
\# G (\mathbb{F}_q) &= q^{14}(1-q^{-2})(1-q^{-6})\\
\#\mathscr{B}G(\mathbb{F}_q) &=  \sum^\infty_{i_1=0} \sum^\infty_{i_2=0} q^{-(2i_1+6i_2+14)}\\
\zeta_{\mathscr{B}G}(t) &= \prod_{k_1=1}^\infty \prod_{k_2=1}^\infty \left(1-q^{-(2k_1+6k_2+6)}t \right)^{-1}
\end{align*}

\underline{$G=F_4$}

\begin{align*}
\# G (\mathbb{F}_q) &= q^{52}(1-q^{-2})(1-q^{-6})(1-q^{-8})(1-q^{-12})\\
\#\mathscr{B}G(\mathbb{F}_q) &=\sum^\infty_{i_1=0} \sum^\infty_{i_2=0} \sum^\infty_{i_3=0} \sum^\infty_{i_4=0} q^{-(2i_1+6i_2+ 8i_3 + 12i_4 + 52)}\\
\zeta_{\mathscr{B}G}(t) &= \prod_{k_1=1}^\infty \prod_{k_2=1}^\infty \prod_{k_3=1}^\infty \prod_{k_4=1}^\infty \left(1-q^{-(2k_1+6k_2+8k_3+12k_4+24)} t \right)^{-1}
\end{align*}

\underline{$G=E_6$}

\begin{align*}
\# G (\mathbb{F}_q) &= q^{78}(1-q^{-2})(1-q^{-5})(1-q^{-6})(1-q^{-8})(1-q^{-9})(1-q^{-12})\\
\#\mathscr{B}G(\mathbb{F}_q) &= \sum^\infty_{i_1=0} \sum^\infty_{i_2=0} \cdots \sum^\infty_{i_6=0} q^{-(2i_1+5i_2+6i_3+8i_4+9i_5+12i_6 + 78)}\\
\zeta_{\mathscr{B}G}(t) &= \prod_{k_1=1}^\infty \prod_{k_2=1}^\infty \cdots \prod_{k_6=1}^\infty \left(1-q^{-\left(2k_1+5k_2+6k_3+8k_4+9k_5+12k_6 + 36 \right)} t \right)^{-1}
\end{align*}

\underline{$G=E_7$}

\begin{align*}
\# G (\mathbb{F}_q) &= q^{133}(1-q^{-2})(1-q^{-6})(1-q^{-8})(1-q^{-10})(1-q^{-12})(1-q^{-14})(1-q^{-18})\\
\#\mathscr{B}G(\mathbb{F}_q) &=  \sum^\infty_{i_1=0} \sum^\infty_{i_2=0} \cdots \sum^\infty_{i_7=0} q^{-(2i_1+6i_2+8i_3+10i_4+12i_5+14i_6 + 18i_7 + 133)}\\
\zeta_{\mathscr{B}G}(t) &= \prod_{k_1=1}^\infty \prod_{k_2=1}^\infty \cdots \prod_{k_7=1}^\infty \left(1-q^{-\left(2k_1+6k_2+8k_3+10k_4+12k_5+14k_6 + 18k_7 + 63 \right)} t \right)^{-1}
\end{align*}

\underline{$G=E_8$}

\begin{align*}
\# G (\mathbb{F}_q) &= q^{248}(1-q^{-2})(1-q^{-8})(1-q^{-12})(1-q^{-14})(1-q^{-18})(1-q^{-20})(1-q^{-24})(1-q^{-30})\\
\#\mathscr{B}G(\mathbb{F}_q) &=   \sum^\infty_{i_1=0} \sum^\infty_{i_2=0} \cdots \sum^\infty_{i_8=0} q^{-(2i_1+8i_2+12i_3+14i_4+18i_5+20i_6 + 24i_7 + 30i_8 + 248)}\\
\zeta_{\mathscr{B}G}(t) &= \prod_{k_1=1}^\infty \prod_{k_2=1}^\infty \cdots \prod_{k_8=1}^\infty \left(1-q^{-\left(2k_1+8k_2+12k_3+14k_4+18k_5+20k_6 + 24k_7 + 30k_8+ 120 \right)} t \right)^{-1}
\end{align*}

\subsection{Twisted Chevalley or Steinberg Groups}\label{Sec3}

Finally,  we shall look at the twisted Chevalley groups, which are sometimes also referred to as the Steinberg groups and which were constructed by Steinberg \cite{St1} as a variation and generalisation of Chevalley's original construction and Lang's theorem. They give rise to the twisted family of finite groups of Lie type \cite{Ca2}. The twisted Chevalley groups are given as algebraic group schemes over $\mathbb{F}_q$ for particular prime powers $q$, and therefore fall into the context given by Steinberg's theorem (see~\cite{Hum}). These twisted groups can be classified as follows:
 
 \begin{itemize}
 \item Classical Steinberg groups \cite{St1}:
 	\begin{enumerate}
	\item $~^2A_n$ over $\mathbb{F}_q$ with $q = p^{2k}$.
	\item $~^2D_n$ over $\mathbb{F}_q$ with $q = p^{2k}$.
	\end{enumerate}
 \item Exceptional Steinberg groups \cite{St1}:
 	\begin{enumerate}[resume]
	\item $~^2 E_6$ over $\mathbb{F}_q$ with $q=p^{2k}$ (constructed also independently by Tits \cite{Ti}).
	\item $~^3D_4$ over $\mathbb{F}_q$ with $q=p^{3k}$.
	\end{enumerate}
\end{itemize}

Note that the \emph{twisting} implies that the roots of unity $\epsilon_i$ appearing in the theorem of Steinberg are not always $1$ here, which can be seen from the calculation of orders of the finite groups of $\F_q$-rational points.

\subsubsection{Classical Steinberg Groups}\leavevmode

\underline{$G=~^2A_n$ over $\F_q$ with $q=p^{2k}$}

$$\# G (\mathbb{F}_q) = q^{n(n+2)}(1+q^{-2})(1-q^{-3}) \cdots (1-(-1)^{n+1}q^{-(n+1)})$$

\underline{For $n=4a$}

\begin{align*}
\#\mathscr{B}G(\mathbb{F}_{q}) &=  \sum^\infty_{i_1=0} \sum^\infty_{i_2=0} \cdots \sum^\infty_{i_n=0} (-1)^{(i_1 + i_3 + \cdots + i_{n-1})} q^{-(2i_1 + 3i_2 + \cdots + (n+1)i_n + n(n+2))}\\
\zeta_{\mathscr{B}G}(t) &= \prod_{k_1=1}^\infty \prod_{k_2=1}^\infty \cdots \prod_{k_n=1}^\infty \left(1-(-1)^{(k_1 + k_3 + \cdots + k_{n-1})}q^{-\left(2k_1 + 3k_2 + \cdots + (n+1)k_n + \frac{n(n-1)}{2} -2\right)} t \right)^{-1}
\end{align*}

\underline{For $n=4a+1$}

\begin{align*}
\#\mathscr{B}G(\mathbb{F}_{q}) & =  \sum^\infty_{i_1=0} \sum^\infty_{i_2=0} \cdots \sum^\infty_{i_n=0} (-1)^{(i_1 + i_3 + \cdots + i_n)} q^{-(2i_1 + 3i_2 + \cdots + (n+1)i_n + n(n+2))}\\
\zeta_{\mathscr{B}G}(t) &= \prod_{k_1=1}^\infty \prod_{k_2=1}^\infty \cdots \prod_{k_n=1}^\infty \left(1+(-1)^{(k_1 + k_3 + \cdots + k_n)}q^{-\left(2k_1 + 3k_2 + \cdots + (n+1)k_n + \frac{n(n-1)}{2} -2\right)} t \right)^{-1}
\end{align*}

\underline{For $n=4a+2$}

\begin{align*}
\#\mathscr{B}G(\mathbb{F}_{q}) &=  \sum^\infty_{i_1=0} \sum^\infty_{i_2=0} \cdots \sum^\infty_{i_n=0} (-1)^{(i_1 + i_3 + \cdots + i_{n-1})} q^{-(2i_1 + 3i_2 + \cdots + (n+1)i_n + n(n+2))}\\
\zeta_{\mathscr{B}G}(t) &= \prod_{k_1=1}^\infty \prod_{k_2=1}^\infty \cdots \prod_{k_n=1}^\infty \left(1+(-1)^{(k_1 + k_3 + \cdots + k_{n-1})}q^{-\left(2k_1 + 3k_2 + \cdots + (n+1)k_n + \frac{n(n-1)}{2} -2\right)} t \right)^{-1}
\end{align*}

\underline{For $n=4a+3$}

\begin{align*}
\#\mathscr{B}G(\mathbb{F}_{q}) &=  \sum^\infty_{i_1=0} \sum^\infty_{i_2=0} \cdots \sum^\infty_{i_n=0} (-1)^{(i_1 + i_3 + \cdots + i_n)} q^{-(2i_1 + 3i_2 + \cdots + (n+1)i_n + n(n+2))}\\
\zeta_{\mathscr{B}G}(t) &= \prod_{k_1=1}^\infty \prod_{k_2=1}^\infty \cdots \prod_{k_n=1}^\infty \left(1-(-1)^{(k_1 + k_3 + \cdots + k_n)}q^{-\left(2k_1 + 3k_2 + \cdots + (n+1)k_n + \frac{n(n-1)}{2} -2\right)} t \right)^{-1}
\end{align*}

\underline{$G=~^2D_n$ over $\mathbb{F}_{q}$ with $q=p^{2k}$}

\begin{align*}
\# G (\mathbb{F}_q) &=q^{n(2n-1)}(1+q^{-n})(1-q^{-2})(1-q^{-4})  \cdots (1-q^{-(2n-2)})\\
\#\mathscr{B}G(\mathbb{F}_{q}) &=  \sum^\infty_{i_1=0} \sum^\infty_{i_2=0} \cdots \sum^\infty_{i_n=0} (-1)^{i_1} q^{-(ni_1+2i_2 + 4i_3 + \dots + (2n-2)i_n + n(2n-1))}\\
\zeta_{\mathscr{B}G}(t) &= \prod_{k_1=1}^\infty \prod_{k_2=1}^\infty \cdots \prod_{k_n=1}^\infty \left(1 + (-1)^{k_1} q^{-\left(nk_1 + 2k_2 + 4k_3 + \cdots + (2n-2)k_n + n(n-1) \right)} t \right)^{-1}
\end{align*}

\subsubsection{Exceptional Steinberg Groups}\leavevmode

\underline{$G=~^2E_6$ over $\mathbb{F}_{q}$ with $q=p^{2k}$}

\begin{align*}
\# G (\mathbb{F}_q) &= q^{78}(1-q^{-2})(1+q^{-5})(1-q^{-6})(1-q^{-8})(1+q^{-9})(1-q^{-12})\\
\#\mathscr{B}G(\mathbb{F}_{q}) &=  \sum^\infty_{i_1=0} \sum^\infty_{i_2=0} \cdots \sum^\infty_{i_6=0} (-1)^{i_2+i_5}q^{-(2i_1+5i_2+6i_3+8i_4+9i_5+12i_6+78)}\\
\zeta_{\mathscr{B}G}(t) &= \prod_{k_1=1}^\infty \prod_{k_2=1}^\infty \cdots \prod_{k_6=1}^\infty \left(1- (-1)^{(k_2+k_5)}q^{-\left(2k_1+5k_2+6k_3+8k_4+9k_5+12k_6 + 36 \right)} t \right)^{-1}
\end{align*}

\underline{$G=~^3D_4$ over $\mathbb{F}_{q}$ with $q=p^{3k}$}

\begin{align*}
\# G (\mathbb{F}_q) &= q^{28}(1-q^{-2})(1-\xi q^{-4})(1-\xi^2 q^{-4})(1-q^{-6}) \text{ with } \xi^3=1, \xi \neq 1\\
\#\mathscr{B}G(\mathbb{F}_{q})& =  \sum^\infty_{i_1=0} \sum^\infty_{i_2=0}\sum^\infty_{i_3=0} \sum^\infty_{i_4=0} \xi^{(i_2 + 2i_3)} q^{-(2i_1+4i_2+4i_3+6i_4+28)}\\
\zeta_{\mathscr{B}G}(t)& = \prod_{k_1=1}^\infty \prod_{k_2=1}^\infty  \prod_{k_3=1}^\infty   \prod_{k_4=1}^\infty \left(1- \xi^{(k_2+2k_3)}q^{-(2k_1+4k_2+4k_3+6k_4+13)} t \right)^{-1}
\end{align*}

\begin{remark}It is well known, that there exists another exotic infinite class of finite simple groups of Lie type, namely the Suzuki and Ree groups  constructed by Suzuki \cite{Sz} and Ree \cite{Re1}, \cite{Re2} respectively. They are described as follows:

 	\begin{enumerate}
	\item $~^2B_2(\mathbb{F}_{q^2})$ with $q^2 = 2^{2n+1}$ \cite{Sz}.
	\item $~^2G_2(\mathbb{F}_{q^2})$ with $q^2 = 3^{2n+1}$ \cite{Re1}.
	\item $~^2F_4(\mathbb{F}_{q^2})$ with $q ^2= 2^{2n+1}$ \cite{Re2}.
	\end{enumerate}

These are again twisted versions of groups of Lie type, but they cannot be interpreted directly as groups of $\F_q$-rational points of some algebraic group in the way described above. However, one could heuristically consider, for example, the group $~^2B_2$ as being an algebraic group over a ``field'' with ${2^{(n+\frac{1}{2})}}$ elements, which, of course, does not exist. On the other hand
de Medts and K. Naert \cite{MN} have recently developed a general framework in which these groups can be obtained as groups of rational points of algebraic groups over a "twisted field" by appropriately extending the category of schemes to a category of "twisted schemes".  Such a twisted field can then be interpreted like a "field with $\sqrt{\smash[b]p}$ elements". Following along this path, one could consider also a category of twisted algebraic stacks and related Lefschetz trace formula which would then allow to do similar considerations as above for classifying stacks of algebraic groups over "twisted fields". This will be part of a follow-up project.\\

{\it Acknowledgements:} The second author likes to thank the University of Michigan-SJTU Joint Institute (UM-SJTU) in Shanghai for its hospitality while parts of this article were completed. Both authors also like to thank the referee for valuable comments.

\end{remark}

\end{document}